\theoremstyle{definition}
\newtheorem{Thm}{Theorem}[section]
\newtheorem{Lem}[Thm]{Lemma}
\newtheorem{Cor}[Thm]{Corollary}
\newtheorem{Rem}[Thm]{Remark}
\def\H{\mathfrak{H}}
\title{On Cannon-Thurston maps for relatively hyperbolic groups}
\date{}
\author{
Yoshifumi Matsuda\footnote
{Graduate School of Mathematical Sciences, 
University of Tokyo, 
3-8-1 Komaba, 
Meguro-ku, 
Tokyo, 
153-8914 Japan,
ymatsuda@ms.u-tokyo.ac.jp} \footnote{
The author is supported by the Global COE Program at Graduate School of Mathematical Sciences, the University of Tokyo, and Grant-in-Aid for Scientific Researches for Young Scientists (B) (No. 22740034), Japan Society of Promotion of Science.}
and 
Shin-ichi Oguni\footnote
{Department of Mathematics, Faculty of Science, Ehime University,
2-5 Bunkyo-cho, 
Matsuyama, 
Ehime, 
790-8577 Japan,
oguni@math.sci.ehime-u.ac.jp}
\footnote{
The author is supported by Grant-in-Aid for Scientific Researches for Young Scientists (B) 
(No. 24740045), Japan Society of Promotion of Science.}}
\begin{document}

\maketitle
%%%%%%%%%%%%%%%%%%%%%%%%%%%%%%%%%%%%%%%%%%%%%%%%%
\begin{abstract}
Baker and Riley proved that a free group of rank $3$
can be contained in a hyperbolic group as a subgroup 
for which the Cannon-Thurston map is not well-defined. 
By using their result, 
we show that the phenomenon 
occurs for not only a free group of rank $3$
but also every non-elementary hyperbolic group. 
In fact it is shown that a similar phenomenon occurs for 
every non-elementary relatively hyperbolic group. \\

\noindent
Keywords:
Cannon-Thurston maps; 
relatively hyperbolic groups;
geometrically finite convergence actions;  
convergence actions \\

\noindent
2010MSC:
20F67;
20F65
\end{abstract}

\section{Introduction}
Given an injective group homomorphism from a hyperbolic group to another hyperbolic group, 
whether the map can be continuously extended on the Gromov boundaries 
is an interesting question by Mitra (see \cite[Section 1]{Mit98b}). 
If such an extension is well-defined, 
the induced map on the Gromov boundaries is called the Cannon-Thurston map. 
The first non-trivial example was known by Cannon and Thurston in the 1980's 
(see \cite{C-T07}). 
Indeed their main theorem implies that 
for a closed hyperbolic $3$-dimensional manifold $M$ which fibers over the circle
with fiber a closed hyperbolic surface $S$, 
when we consider 
the induced injective group homomorphism between fundamental groups of $S$ and $M$, 
the Cannon-Thurston map is well-defined. 
Also more examples for which 
the Cannon-Thurston maps are well-defined can be recognized by 
Mitra's results (see \cite{Mit98a} and \cite{Mit98b}). 
At the present time, there are many works 
related to well-definedness of the Cannon-Thurston maps.
Nevertheless Baker and Riley gave a negative answer (\cite[Theorem 1]{B-R12}). 
Indeed they showed that a free group of rank $3$
can be contained in a hyperbolic group as a subgroup 
for which the Cannon-Thurston map is not well-defined.
In this paper we show that the phenomenon 
occurs for not only a free group of rank $3$
but also every non-elementary hyperbolic group. 
In fact it is shown that a similar phenomenon occurs for 
every non-elementary relatively hyperbolic group.  

Throughout this paper, 
every countable group is endowed with the discrete topology.
We use a definition of relative hyperbolicity for groups 
from a dynamical viewpoint 
(see \cite[Definition 1]{Bow12}, \cite[Theorem 0.1]{Yam04} and \cite[Definition 3.1]{Hru10}). 
Also we use a definition of relative quasicovexity for subgroups of 
relatively hyperbolic groups from a dynamical viewpoint
(see \cite[Definition 1.6]{Dah03}). 
Refer to \cite[Section 3 and Section 6]{Hru10} 
for other several equivalent definitions of those.
Also see \cite{Tuk94}, \cite{Bow99b} and \cite{Bow12} 
for some definitions and properties related to convergence actions. 

Let $G$ be a non-elementary countable group 
and $\H$ be a conjugacy invariant collection of proper infinite subgroups of $G$. 
Suppose that $G$ is hyperbolic relative to $\H$, that is, 
there exists a compact metrizable space 
endowed with a geometrically finite convergence action of $G$
such that $\H$ is the set of all maximal parabolic subgroups of $G$. 
Such a space is unique up to $G$-equivariant homeomorphisms
and called the Bowditch boundary of $(G,\H)$. 
In this paper we denote it by $\partial (G,\H)$. 
We remark that the set of conjugacy classes of elements of $\H$ is automatically finite
by \cite[Theorem 1B]{Tuk98}.
When the group $G$ is hyperbolic, 
it is hyperbolic relative to the empty collection $\emptyset$
and the Bowditch boundary $\partial (G,\emptyset)$
is nothing but the Gromov boundary $\partial G$.
 
We consider another non-elementary countable group $G'$ 
which is hyperbolic relative to a conjugacy invariant collection $\H'$
of proper infinite subgroups of $G'$. 
Suppose that $G$ is a subgroup of $G'$. 
Then we can consider the restricted action of $G$ on $\partial(G',\H')$
and the limit set $\Lambda(G,\partial(G',\H'))$. 
If there exists a $G$-equivariant continuous map 
from $\partial(G, \H)$ to $\partial(G',\H')$, 
then it is unique and the image is equal to $\Lambda(G,\partial(G',\H'))$
(see for example \cite[Lemma 2.3 (1), (2)]{M-O-Y5}). 
When the map exists, it is also called the Cannon-Thurston map. 
If the Cannon-Thurston map is well-defined, 
then any $H\in \H$ is contained in some $H'\in \H'$
(see for example \cite[Lemma 2.3 (5)]{M-O-Y5}). 
In general the converse 
is not true by \cite[Theorem 1]{B-R12} (see also Lemma \ref{extension}). 
Our main theorem claims that the converse is also not true for 
the case where the pair of $G$ and $\H$ 
is not necessarily the pair of a free group of rank $3$ and $\emptyset$. 
More precisely we have the following: 
\begin{Thm}\label{noCT}
Let $G$ be a non-elementary countable group 
which is hyperbolic relative to a conjugacy invariant collection $\H$
of proper infinite subgroups of $G$. 
Then there exist a countable group $G'$ containing $G$ as a subgroup 
and a conjugacy invariant collection $\H'$ of proper infinite subgroups of $G'$
satisfying the following: 
\begin{enumerate}
\item[(i)] the group $G'$ is hyperbolic relative to $\H'$;
\item[(ii)] every $H\in\H$ belongs to $\H'$ and 
each $H'\in \H'$ is conjugate to some $H\in \H$ in $G'$;
\item[(iii)]  there exists no $G$-equivariant continuous map 
from $\partial(G,\H)$ to $\partial(G',\H')$;
\item[(iv)] the group $G$ is not quasiconvex relative to $\H'$ in $G'$. 
\end{enumerate}
\end{Thm}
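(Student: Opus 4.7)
The plan is to construct $G'$ as an amalgamated free product $G\ast_{F}\Gamma$, where $F\cong F_{3}$ is a carefully chosen free subgroup of rank $3$ inside $G$ and $\Gamma$ is the hyperbolic group supplied by Baker-Riley \cite{B-R12} into which $F$ embeds without admitting a Cannon-Thurston map. Since $(G,\H)$ is non-elementary, a standard ping-pong construction on $\partial(G,\H)$ using three loxodromic elements of $G$ with pairwise disjoint fixed-point pairs produces a free subgroup $F\cong F_{3}$ of $G$ that is relatively quasiconvex in $(G,\H)$, almost malnormal in $G$ relative to $\H$, and intersects every conjugate of every peripheral subgroup trivially. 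Now apply Baker-Riley to embed this $F$ into a hyperbolic group $\Gamma$ so that no Cannon-Thurston map $\partial F\to\partial\Gamma$ exists.

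Set $G':=G\ast_{F}\Gamma$. By a combination theorem for relatively hyperbolic groups of the type established by Dahmani and by Mart\'{\i}nez-Pedroza, the hypotheses on $F$ inside $(G,\H)$ imply that $G'$ is hyperbolic relative to the $G'$-conjugacy closure of $\H\cup\{\Gamma\}$, with both $G$ and $\Gamma$ realized as relatively quasiconvex subgroups of $G'$ and with natural equivariant continuous embeddings $\partial(G,\H)\hookrightarrow\partial(G',\H\cup\{\Gamma\})$ and $\partial\Gamma\hookrightarrow\partial(G',\H\cup\{\Gamma\})$. Because $\Gamma$ is itself hyperbolic, the standard theorem that one may decompose peripheral subgroups by their own peripheral structures allows one to discard $\Gamma$ from the peripheral family, producing the desired $\H'$ (the $G'$-conjugacy closure of $\H$) and establishing (i) and (ii). The two boundary embeddings persist in $\partial(G',\H')$.

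For (iii), suppose for contradiction that a $G$-equivariant continuous map $\phi\colon\partial(G,\H)\to\partial(G',\H')$ exists. Because $F$ is a relatively quasiconvex free subgroup of $(G,\H)$ meeting peripheral subgroups trivially, there is a natural $F$-equivariant continuous injection $\partial F\hookrightarrow\partial(G,\H)$ onto the $F$-limit set. Composing with $\phi$ yields an $F$-equivariant continuous map $\partial F\to\partial(G',\H')$ whose image lies in the $F$-limit set of $\partial(G',\H')$, which is contained in the intersection of the $G$- and $\Gamma$-limit sets and hence in $\partial\Gamma\subset\partial(G',\H')$. This produces a Cannon-Thurston map $\partial F\to\partial\Gamma$ for the inclusion $F\le\Gamma$, contradicting Baker-Riley. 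Finally, (iv) is immediate from (iii) via the standard fact that any relatively quasiconvex inclusion admits a Cannon-Thurston map.

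The main technical obstacle is the combination step: arranging that the peripheral structure of $G'$ contains no \emph{new} conjugacy classes beyond those coming from $\H$, despite $F$ being badly non-quasiconvex in $\Gamma$. The device of first treating $\Gamma$ as a single peripheral of the amalgam and then shedding it via its own hyperbolicity is what makes this work. Verifying that $F$ can be chosen relatively quasiconvex and almost malnormal in $(G,\H)$, so that the combination theorem applies on the $G$-side, is the other nontrivial ingredient.
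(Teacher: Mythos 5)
Your construction follows the same skeleton as the paper's proof: locate a rank-$3$ free subgroup of $G$ that can be adjoined to the peripheral structure, amalgamate with the Baker--Riley group over it, apply Dahmani's combination theorem after declaring the new vertex group peripheral, then shed that peripheral subgroup using its hyperbolicity, and finally derive (iii) by restricting an alleged equivariant map to limit sets and (iv) from (iii) via relative quasiconvexity. The arguments you give for (iii) and (iv), and the key device of temporarily treating $\Gamma$ as a single peripheral subgroup (which correctly sidesteps the fact that $F$ is badly non-quasiconvex in $\Gamma$), are essentially those of the paper.

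There is, however, a genuine gap in your first step. You require $F\cong F_3$ to be almost malnormal in $G$ relative to $\H$ (equivalently, adjoinable to the peripheral structure so that it is a maximal parabolic subgroup of some geometrically finite action), and you assert that ping-pong produces such an $F$. This is impossible whenever the maximal finite normal subgroup $M(G)$ is nontrivial (e.g.\ $G=F_2\times(\Z/2\Z)$): the subgroup $M(G)$ acts trivially on every Bowditch boundary of $G$, so every maximal parabolic subgroup must contain $M(G)$, which a torsion-free group cannot. Concretely, $F$ acts by conjugation on the finite group $M(G)$ with finite-index kernel $F_0$, and one checks $gFg^{-1}\cap F\supseteq F_0$ for every $g\in M(G)\setminus F$, so $F$ is never almost malnormal in this situation. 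The paper circumvents this by amalgamating over $F'=F\times M(G)$ rather than $F$ (using the appendix of \cite{M-O-Y5} to produce such a hyperbolically embedded $F'$) and correspondingly replacing the Baker--Riley group $L$ by $L'=L\times M(G)$, so that $G'=G*_{F'}L'$; with that modification the rest of your argument goes through. A smaller point: to close the contradiction in (iii) you should also note that an $F$-equivariant continuous map $\partial F\to\partial L$ automatically yields a continuous extension of the inclusion $F\to L$ to the compactifications (this is Lemma \ref{extension}), since the Baker--Riley theorem is phrased in terms of such extensions rather than equivariant boundary maps.
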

\noindent 
If we apply Theorem \ref{noCT} for the case where $G$ is hyperbolic and $\H$ is $\emptyset$, 
then $G'$ is hyperbolic and $\H'$ is $\emptyset$. 
We remark that our proof uses \cite[Theorem 1]{B-R12}.

\begin{Rem}
Theorem \ref{noCT} (i), (ii) and (iv) can be considered 
as a generalization of \cite[Theorem A]{Kap99}
for relatively hyperbolic groups. 
\end{Rem}

Let $G$ be a countable group and $X$ be a compact metrizable space endowed 
with a minimal non-elementary convergence action of $G$. 
We denote by $\H(G,X)$ 
the set of all maximal parabolic subgroups with respect to the action of $G$ on $X$
and call it the peripheral structure with respect to the action of $G$ on $X$. 
Let us consider another compact metrizable space $Y$
endowed with a minimal non-elementary convergence action of $G$. 
When there exists a $G$-equivariant continuous map 
from $X$ to $Y$, we say that $X$ is a blow-up of $Y$ 
and that $Y$ is a blow-down of $X$.
Suppose that the action of $G$ on $X$ is geometrically finite.
\cite[Proposition 1.6]{M-O-Y5} claims that 
$X$ has no proper blow-ups with the same peripheral structure.
\cite[Theorem 1.4]{M-O-Y5} gives a family of uncountably infinitely many 
blow-downs of $X$ with the same peripheral structure.
On the other hand Theorem \ref{noCT} implies that 
there exists a compact metrizable space
endowed with a minimal non-elementary convergence action of $G$ 
such that the peripheral structure is equal to $\H(G,X)$ 
and it is not a blow-down of $X$.  
In fact the following is shown: 
\begin{Cor}\label{notdown}
Let $G$ be a countable group. 
Let $X$ be a compact metrizable space endowed with 
a geometrically finite convergence action of $G$. 
Then there exists a compact metrizable space $Y$ endowed with 
a minimal non-elementary convergence action of $G$
satisfying the following 
\begin{enumerate}
\item[(i)] $\H(G,X)=\H(G,Y)$;
\item[(ii)] the spaces $X$ and $Y$ has no common blow-ups. 
In particular $Y$ is not a blow-down of $X$.
\end{enumerate}
\end{Cor}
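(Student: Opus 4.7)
The plan is to apply Theorem \ref{noCT} to produce an ambient relatively hyperbolic group $(G', \H')$, and then take $Y$ to be the limit set of $G$ in its Bowditch boundary. Setting $\H := \H(G, X)$ so that $X$ is $G$-equivariantly homeomorphic to $\partial(G, \H)$, Theorem \ref{noCT} furnishes a countable group $G' \supseteq G$ together with a conjugacy-invariant peripheral collection $\H'$ of proper infinite subgroups of $G'$ satisfying (i)--(iv) of that theorem. Define
\[
Y := \Lambda\bigl(G, \partial(G', \H')\bigr)
\]
with the restricted $G$-action. The restriction of a convergence action to a subgroup is a convergence action, and its restriction to the invariant limit set is minimal; non-elementarity of the $G$-action on $Y$ follows from non-elementarity of $G$, since the fixed points of its loxodromic or parabolic elements (inherited from the $G'$-action on $\partial(G', \H')$) account for at least three distinct points in $Y$.

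For (i), each $H \in \H \subseteq \H'$ determines a parabolic fixed point $p_H \in \partial(G', \H')$ with $G'$-stabilizer $H$; since $H \leq G$, the $G$-stabilizer of $p_H$ is $H$, and $p_H$ lies in $Y$. Conversely, a maximal parabolic subgroup of $G$ on $Y$ equals $\Stab_G(q)$ for some $q \in Y$ with infinite $G$-stabilizer. The point $q$ cannot be conical for the $G'$-action (whose stabilizers are virtually cyclic with loxodromic elements, incompatible with the parabolicity of $q$ for the $G$-action on $Y$), so $q$ is parabolic for $G'$ and $\Stab_G(q) = G \cap gH_0g^{-1}$ for some $g \in G'$, $H_0 \in \H$. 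Invoking a structural feature of the construction in Theorem \ref{noCT}, one argues that this intersection can be infinite only when $g \in G$, giving $\Stab_G(q)$ as a $G$-conjugate of $H_0$.

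For (ii), suppose for contradiction that $Z$ is a common blow-up of $X$ and $Y$, with $G$-equivariant continuous surjections $\phi: Z \to X$ and $\psi: Z \to Y$. I would show that $\psi$ factors through $\phi$: the resulting $G$-equivariant continuous map $\bar\psi: X \to Y \hookrightarrow \partial(G', \H')$ would then be a Cannon-Thurston map $\partial(G, \H) \to \partial(G', \H')$, contradicting (iii) of Theorem \ref{noCT}. Fibers of $\phi$ over conical limit points of $X$ are singletons (a standard property of $G$-equivariant maps between minimal convergence actions), so $\psi$ is trivially constant there. Over a parabolic fixed point $p_H \in X$, every $z \in F_H := \phi^{-1}(p_H)$ is a parabolic point of the $Z$-action (as conical points of $Z$ must map to conical points of $X$), so its $G$-stabilizer is an infinite subgroup $K \leq H$ containing an element $k$ of infinite order. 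Being a non-torsion element of $H$, $k$ acts on $\partial(G', \H')$ parabolically with unique fixed point $p_H$, hence on $Y$ with unique fixed point $p_H$; since $k$ fixes $\psi(z) \in Y$, one concludes $\psi(z) = p_H$ and hence $\psi(F_H) = \{p_H\}$.

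The main obstacle will be the converse direction in (i): ensuring that no maximal parabolic subgroups of $G$ on $Y$ arise beyond $\H$. This rests on a residual malnormality-type property of the construction underlying Theorem \ref{noCT}, namely that conjugates $gH_0g^{-1}$ for $g \in G' \setminus G$ and $H_0 \in \H$ have finite intersection with $G$. Without such control, stray points $q \in Y$ with $g \notin G$ would contribute new maximal parabolic stabilizers and strictly enlarge $\H(G, Y)$ beyond $\H$, derailing both the verification of (i) and the stabilizer-based argument in (ii).
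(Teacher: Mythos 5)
Your construction of $Y$ coincides with the paper's: apply Theorem \ref{noCT} to $\H:=\H(G,X)$ and set $Y:=\Lambda(G,\partial(G',\H'))$. But both halves of your verification have gaps. For (i) you correctly isolate the needed claim --- that $G\cap g'H_0g'^{-1}$ is finite whenever $H_0\in\H$ and $g'\in G'\setminus G$ --- but you leave it unproved and yourself call it the main obstacle. It is in fact true: in $G'=G*_{F'}L'$ such an intersection stabilizes two distinct vertices of the Bass--Serre tree, hence lies in a $G'$-conjugate of the edge group $F'\subset L'$, and the peripheral subgroup $g'H_0g'^{-1}\in\H'$ meets any peripheral subgroup $g''L'g''^{-1}$, which belongs to a different conjugacy class, in a finite set. (The paper simply asserts $\H=\H(G,Y)$ without comment, so you are right that something must be said here, but an unproved ``structural feature'' is not a proof.)

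For (ii) the paper does not run your fiber analysis. It first replaces an arbitrary common blow-up $Z$ by one satisfying $\H(G,Z)=\H(G,X)=\H(G,Y)$ using \cite[Lemma 2.6]{M-O-Y5}, then applies \cite[Proposition 1.6]{M-O-Y5} (a geometrically finite action admits no proper blow-up with the same peripheral structure) to conclude that $Z\to X$ is a homeomorphism, so $Y$ would be a blow-down of $X$, contradicting (iii) via Lemma \ref{extension}. Your direct argument breaks at two points. First, the assertion that conical points of $Z$ must map to conical points of $X$ is false: for $G$ a once-punctured-torus group with boundary word $c$, the blow-up $\partial(G,\emptyset)\to\partial(G,\H_c)$, where $\H_c$ is the set of conjugates of $\langle c\rangle$, sends the two conical fixed points of $c$ to the non-conical bounded parabolic point. (The correct standard fact is the one you use earlier, about preimages of conical points being singletons.) Second, even granting that no $z\in F_H=\phi^{-1}(p_H)$ is conical, $Z$ is only assumed to be a minimal non-elementary convergence action, not a geometrically finite one, so a non-conical point need not be parabolic and need not have infinite stabilizer; for instance, if $H$ is non-elementary hyperbolic the fiber $F_H$ can be a copy of $\partial H$, most of whose points have finite $G$-stabilizer, and for such $z$ there is no infinite-order $k\in H$ fixing $z$, so your argument gives no control on $\psi(z)$. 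A repair needs either the cited results of \cite{M-O-Y5} or a density-and-continuity argument on the preimage of the conical points of $X$ (which is $G$-invariant and hence dense in $Z$ by minimality); the pointwise stabilizer argument alone does not suffice.
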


\begin{Rem}
For every non-elementary relatively hyperbolic group
(resp. every non-elementary hyperbolic group), 
the second question (resp. the first question) in \cite[Section 1]{Mj09}
has a negative answer by this corollary. 
Also this corollary implies that every non-elementary relatively hyperbolic group
does not have the universal convergence action which is defined 
by Gerasimov (\cite[Subsection 2.4]{Ger09}). 
\end{Rem}

\section{Proof of Theorem \ref{noCT}} 
Before we show Theorem \ref{noCT}, we fix some notations. 
Let a countable group $G$ act on a compact metrizable space $X$.
Suppose that the action is a minimal non-elementary convergence action.  
Then $X$ can be regarded as a boundary of $G$. 
In fact $G\cup X$ has the unique topology such that 
this is a compactification of $G$ and the natural action on $G\cup X$ 
is a convergence action whose limit set is $X$ 
(see for example \cite[Lemma 2.1]{M-O-Y5}). 
Let $L$ be a subgroup of $G$. 
Then the restricted action of $L$ on $X$
is a convergence group action. 
We denote by $\Lambda(L,X)$ the limit set. 
If $L$ is neither virtually cyclic nor 
parabolic with respect to the action on $X$, 
then the induced action of $L$ on $\Lambda(L,X)$ 
is also a minimal non-elementary convergence action. 

We need the following lemma in order to show Theorem \ref{noCT}. 
\begin{Lem}\label{extension}
Let $G'$ be a countable group and have a subgroup $G$. 
Let $X$ an $X'$ be compact metrizable spaces 
endowed with minimal non-elementary convergence actions of $G$ and $G'$, respectively.
Then the following is equivalent: 
\begin{enumerate}
\item[(i)] there exists a $G$-equivariant continuous map from $X$ to $X'$;
\item[(ii)] there exists a $G$-equivariant continuous map from $X$ to $\Lambda(G,X')$;
\item[(iii)] the injection $G\to G'$ is continuously extended to a map 
from $G\cup X$ to $G'\cup X'$.
\end{enumerate}
\end{Lem}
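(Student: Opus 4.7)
The plan is to prove (i)$\Leftrightarrow$(ii) as a limit-set observation, and (i)$\Leftrightarrow$(iii) using the convergence-action characterization of the compactifications $G\cup X$ and $G'\cup X'$.

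For (i)$\Leftrightarrow$(ii), the direction (ii)$\Rightarrow$(i) follows by post-composing with the inclusion $\Lambda(G,X')\hookrightarrow X'$. For the converse, given $f\colon X\to X'$, I would show $f(X)\subseteq\Lambda(G,X')$: minimality and non-elementarity of the action on $X$ make $X$ infinite and give, for every $x\in X$, a representation $x=\lim g_n\cdot x_0$ with $g_n\to\infty$ in $G$; equivariance and continuity then exhibit $f(x)$ as an accumulation point of a $G$-orbit in $X'$.

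The direction (iii)$\Rightarrow$(i) is obtained by restricting the extension to $X$: its image lies in $X'$ because $G'$ is discrete in $G'\cup X'$, so a sequence in $G$ going to infinity must have any continuous extension send its limit into $X'$. The substantive direction is (i)$\Rightarrow$(iii): I would define $\bar\iota\colon G\cup X\to G'\cup X'$ as the inclusion on $G$ and as $f$ on $X$, and verify continuity at each $x\in X$ (points of $G$ being isolated, and sequences in $X$ being handled by continuity of $f$). The only case to treat is a sequence $g_n\in G$ with $g_n\to x$ in $G\cup X$; I need $g_n\to f(x)$ in $G'\cup X'$. Passing to any convergent subsequence in $G'\cup X'$, its limit lies in $X'$ (since $g_n\to\infty$ in $G'$), say an attracting point $a'$ with some repelling point $b'\in X'$. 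The same subsequence has attracting point $x\in X$ with some repelling point $b\in X$, so equivariance gives $g_n\cdot f(y)\to f(x)$ for every $y\in X\setminus\{b\}$. If there exists such a $y$ with $f(y)\neq b'$, then the convergence-action dynamics on $X'$ force $g_n\cdot f(y)\to a'$, hence $a'=f(x)$.

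The main obstacle is the degenerate case $f(X\setminus\{b\})\subseteq\{b'\}$. I would handle it by using that a minimal non-elementary convergence action has no isolated points, so $X\setminus\{b\}$ is dense, forcing $f\equiv b'$; equivariance then gives $G\subseteq\Stab_{G'}(b')$. Non-elementarity of $G$ excludes $b'$ from being a conical limit point of $G'$ (whose stabilizer would be finite), so $b'$ must be a bounded parabolic point and $G$ a subgroup of the corresponding bounded parabolic subgroup. The standard fact that sequences tending to infinity in a bounded parabolic subgroup converge uniformly on $X'$ to the parabolic fixed point then gives $a'=b'=f(x)$, closing the argument.
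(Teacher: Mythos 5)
Your route is genuinely different from the paper's. The paper treats (iii)$\Rightarrow$(i) and (i)$\Rightarrow$(ii) as trivial and puts all the weight on (ii)$\Rightarrow$(iii), which it disposes of by citing \cite[Lemma 2.3]{M-O-Y5} to extend $\phi$ to a continuous map $id_G\cup\phi\colon G\cup X\to G\cup\Lambda(G,X')$ and then observing that $G\cup\Lambda(G,X')$ sits inside $G'\cup X'$ as the closure of the $G$-orbit of the identity, so the inclusion is continuous. You instead prove (i)$\Rightarrow$(iii) directly by checking convergence of sequences $g_n\to x$ against the north--south dynamics on $X$ and on $X'$. This is self-contained and makes visible exactly what the citation hides; the non-degenerate case of your argument ($a'=f(x)$ whenever some $f(y)$ with $y\neq b$ avoids the repeller $b'$) is correct and is essentially the content of the quoted lemma.

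There is, however, a genuine flaw in your treatment of the degenerate case $f\equiv b'$. You invoke the dichotomy ``conical limit point or bounded parabolic point'' for $b'$, but that dichotomy characterizes \emph{geometrically finite} actions, and the action of $G'$ on $X'$ is only assumed to be a minimal non-elementary convergence action; nothing entitles you to it. Moreover, the assertion that the stabilizer of a conical limit point is finite is false: the fixed points of a loxodromic element are conical and their stabilizers contain an infinite cyclic group. So as written this step does not go through.

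The conclusion you need is nevertheless true and reachable by a more elementary argument that stays within the hypotheses. Since $f\equiv b'$ and $f$ is $G$-equivariant, $G\subseteq\Stab_{G'}(b')$. A subgroup of a convergence group whose limit set contains at least three points cannot fix a point of the space (loxodromic fixed-point pairs are dense in $\Lambda\times\Lambda$, and each such pair would have to contain the fixed point), so $\Lambda(G,X')$ has at most two points. As $G$ acts on $X$ as a non-elementary convergence group, $G$ is infinite and not virtually cyclic, hence $G$ is parabolic with respect to its action on $X'$ with $\Lambda(G,X')=\{b'\}$. Since the attracting and the repelling point of any wandering sequence in $G$ both lie in $\Lambda(G,X')$, your subsequence satisfies $a'=b'=f(x)$, which is what you wanted. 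With this repair your proof is complete; no geometric finiteness and no boundedness of the parabolic point are needed.
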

\begin{proof}
The implication from (iii) to (i) (resp. from (i) to (ii)) is trivial. 
We show that (ii) implies (iii). 
Suppose that we have a $G$-equivariant continuous map $\phi$ from $X$ to $\Lambda(G,X')$. 
Then this is extended to a continuous map 
$id_G\cup \phi:G\cup X\to G\cup \Lambda(G,X')$ (see \cite[Lemma 2.3]{M-O-Y5}).
Since $G\cup \Lambda(G,X')$ is regarded as 
the closure of $G$-orbit of the unit element of $G'$ in $G'\cup X'$,
the injection $\iota: G\cup \Lambda(G,X')\to G'\cup X'$ is continuous. 
Then $\iota\circ (id_G\cup \phi):G\cup X\to G'\cup X'$ 
is a continuous extension of the injection $G\to G'$.
\end{proof}

\begin{proof}[Proof of Theorem \ref{noCT}]
Since $G$ has the maximal finite normal subgroup by \cite[Lemma 3.3]{A-M-O07}, 
we denote it by $M(G)$. 
By using \cite[Theorem B.1]{M-O-Y5}, 
we take a subgroup $F'=F\times M(G)$ of $G$ 
such that $F$ is a free group of rank $3$ and 
$G$ is hyperbolic relative to 
\[
\H\cup \{K\subset G\ |\ K=gF'g^{-1} \text{ for some }g\in G\}.
\] 
Take a hyperbolic group $L$ containing $F$ as a subgroup
such that the injection $F\to L$ can not continuously extend 
on the Gromov-boundaries by \cite[Theorem 1]{B-R12}. 
We remark that 
there exists no $F$-equivariant continuous map from $\partial F$ to $\partial L$ 
by Lemma \ref{extension}. 
We put $L':=L\times M(G)$, $G':=G*_{F'}L'$ and 
\[
\H':=\{H'\subset G'\ |\ H'=g'Hg'^{-1} \text{ for some }H\in \H
\text{ and for some }g'\in G'\}.
\] 
By the construction, we have the condition (ii).
Also it follows from \cite[Theorem 0.1 (2)]{Dah03} 
that $G'$ is hyperbolic relative to 
\[
\H'\cup \{K\subset G'\ |\ K=g'L'g'^{-1} \text{ for some }g'\in G'\}.
\] 
Since $L'$ is hyperbolic, we have the condition (i) by \cite[Theorem 2.40]{Osi06a}. 

Now we show the condition (iii). 
Assume that there exists a $G$-equivariant continuous map $\phi:\partial(G,\H)\to \partial(G',\H')$.  
The map $\phi$ implies $F$-equivariant continuous map 
$\Lambda(F,\partial(G,\H))\to \Lambda(L,\partial(G',\H'))$. 
Since $F'$ (resp. $L'$) is hyperbolically embedded into $G$ (resp. $G'$) 
relative to $\H$ (resp. $\H'$) in the sense of \cite[Definition 1.4]{Osi06b}, 
$F$ (resp. $L$) is strongly quasiconvex relative to $\H$ (resp. $\H'$) in $G$ (resp. $G'$)
in the sense of \cite[Definition 4.11]{Osi06a} 
by \cite[Theorem 1.5]{Osi06b} and \cite[Theorem 4.13]{Osi06a}. 
Then the action on $F$ (resp. $L$) on $\Lambda(F,\partial(G,\H))$ 
(resp. $\Lambda(L,\partial(G',\H'))$) is a geometrically finite convergence action
without parabolic points (see \cite[Theorem 9.9]{Hru10}). 
Hence $\Lambda(F,\partial(G,\H))$ (resp. $\Lambda(L,\partial(G',\H'))$)
is $F$-equivariant (resp. $L$-equivariant) homeomorphic to 
the Gromov boundary $\partial F$ (resp. $\partial L$) 
by \cite[Theorem 0.1]{Bow98} and \cite[Theorem 1A]{Tuk98}. 
Hence $\phi$ gives an $F$-equivariant continuous map from $\partial F$ to $\partial L$. 
This contradicts the fact that there exists no such maps.

Finally we show the condition (iv).
Assume that $G$ is quasiconvex relative to $\H'$ in $G'$. 
The peripheral structure with respect to the action of $G$ on $\partial(G',\H')$ is  
\[\H'':=\{P\subset G \ |\ P\text{ is infinite and }P=G\cap H'\text{ for some }H'\in\H'\}.\]
By \cite[Definition 1.6]{Dah03}, 
$\partial (G,\H'')$ is $G$-equivariant homeomorphic to $\Lambda(G,\partial(G',\H'))$. 
Since we have $\H\subset \H''$ by the condition (ii), 
there exists a $G$-equivariant continuous map from 
$\partial (G,\H)$ to $\partial(G,\H'')$ by \cite[Theorem 1.1]{M-O-Y5}. 
Thus we have a $G$-equivariant continuous map from $\partial (G,\H)$ to $\partial (G',\H')$. 
This contradicts the condition (iii).
\end{proof}

\begin{proof}[Proof of Corollary \ref{notdown}]
For $G$ and $\H:=\H(G,X)$, 
we take $G'$ and $\H'$ in Theorem \ref{noCT} and put $Y:=\Lambda(G,\partial (G',\H'))$. 
Note that $X=\partial(G,\H)$ and $\H=\H(G,Y)$.
Assume that there exists a common blow-up of $X$ and $Y$. 
Since $\H(G,X)=\H(G,Y)$, we have a compact metrizable space $Z$ endowed with 
a minimal non-elementary convergence action of $G$ which 
is a common blow-up of $X$ and $Y$ such that $\H(G,Z)=\H(G,X)=\H(G,Y)$ 
by \cite[Lemma 2.6]{M-O-Y5}.  
Then the $G$-equivariant continuous map from $Z$ to $X$ is a
$G$-equivariant homeomorphism by \cite[Proposition 1.6]{M-O-Y5}. 
Hence $Y$ is a blow-down of $X$ and thus we have a $G$-equivariant continuous map 
from $\partial(G,\H)$ to $\partial(G',\H')$ by Lemma \ref{extension}. 
This contradicts the condition (iii) in Theorem \ref{noCT}. 
\end{proof}

\begin{Rem}
The space $Y$ in the above proof cannot be written as inverse limit of any inverse system of 
compact metrizable spaces endowed with geometrically finite convergence actions of $G$
(compare with \cite[Theorem 1.4]{M-O-Y5}). 
Indeed assume that $Y$ is inverse limit of an inverse system of 
compact metrizable spaces $X_i$ $(i\in I)$ 
endowed with geometrically finite convergence actions of $G$.
Since every element $H\in \H$ is parabolic with respect to the action on $Y$ 
and thus on $X_i$ for each $i\in I$, 
there exists a unique $G$-equivariant continuous map 
from $\partial (G,\H)$ to $X_i$ for each $i\in I$
by \cite[Theorem 1.1]{M-O-Y5}. 
Hence we have a $G$-equivariant continuous map from $\partial (G,\H)$ to $Y$. 
This contradicts Corollary \ref{notdown}. 

It may be interesting to ask whether a given 
compact metrizable space endowed with a geometrically infinite convergence action of $G$
can be written as inverse limit of some inverse system of 
compact metrizable spaces endowed with geometrically finite convergence actions of $G$. 
\end{Rem}

%%%%%%%%%%%%%%%%%%%%%%%%%%%%%%%%%%

\end{document}